\newcommand{\excise}[1]{}
\newtheorem{thm}{Theorem}[section]
\newtheorem{lemma}[thm]{Lemma}
\newtheorem{prop}[thm]{Proposition}
\newtheorem{question}[thm]{Question}
\theoremstyle{definition}
\newtheorem{example}[thm]{Example}
\numberwithin{equation}{section}
\renewcommand\>{\rangle}
\newcommand\<{\langle}
\newcommand\RR{\mathbb{R}}
\newcommand\ZZ{\mathbb{Z}}
\newcommand\dd{{\mathbf d}}
\newcommand\xx{{\mathbf x}}
\newcommand\zz{{\mathbf z}}
\DeclareMathOperator\lcm{lcm} 
\DeclareMathOperator\Ap{Ap} 
\begin{document}

\mbox{}
\title[Some asymptotic results on $p$-lengths of factorizations]{Some asymptotic results on $p$-lengths of factorizations for numerical semigroups \\ and arithmetical congruence monoids}

\author[S.~Chapman]{Spencer Chapman}
\address{Mathematics Department\\Trinity University\\San Antonio, TX 78212}
\email{schapma3@trinity.edu}

\author[E.~B.~Dugan]{Eli B.~Dugan}
\address{Mathematics and Statistics Department\\Williams College\\Williamstown, MA 01267}
\email{ebd3@williams.edu}

\author[S.~Gaskari]{Shadi Gaskari}
\address{Mathematics Department\\San Diego State University\\San Diego, CA 92182}
\email{shgaskari@gmail.com}

\author[E.~Lycan]{Emi Lycan}
\address{Mathematics Department\\San Diego State University\\San Diego, CA 92182}
\email{rlycan0041@sdsu.edu}

\author[S.~Mendoza]{Sarah Mendoza De La Cruz}
\address{Mathematics Department\\University of Texas at Austin\\Austin, TX 78712}
\email{sarahpaolam11@gmail.com}

\author[C.~O'Neill]{Christopher O'Neill}
\address{Mathematics Department\\San Diego State University\\San Diego, CA 92182}
\email{cdoneill@sdsu.edu}

\author[V.~Ponomarenko]{Vadim Ponomarenko}
\address{Mathematics Department\\San Diego State University\\San Diego, CA 92182}
\email{vponomarenko@sdsu.edu}

\date{\today}

\begin{abstract}
A factorization of an element $x$ in a monoid $(M, \cdot)$ is an expression of the form $x = u_1^{z_1} \cdots u_k^{z_k}$ for irreducible elements $u_1, \ldots, u_k \in M$, and the length of such a factorization is $z_1 + \cdots + z_k$.  We introduce the notion of $p$-length, a generalized notion of factorization length obtained from the $\ell_p$-norm of the sequence $(z_1, \ldots, z_k)$, and present asymptotic results on extremal $p$-lengths of factorizations for large elements of numerical semigroups (additive submonoids of $\ZZ_{\ge 0}$) and arithmetical congruence monoids (certain multiplicative submonoids of $\ZZ_{\ge 1}$).  Our results, inspired by analogous results for classical factorization length, demonstrate the types of combinatorial statements one may hope to obtain for sufficiently nice monoids, as well as the subtlety such asymptotic questions can have for general monoids.  
\end{abstract}

\maketitle


\section{Introduction}
\label{sec:intro}

Given a cancellative, commutative monoid $(M, \cdot)$, a \emph{factorization} of $x \in M$ is an expression of the form
\begin{equation}\label{eq:firstfactorization}
x = u_1^{z_1} \cdots u_k^{z_k}
\end{equation}
where $u_1, \ldots, u_k \in M$ are distinct irreducible elements (or \emph{atoms}) and $\zz \in \ZZ_{\ge 1}^k$.  One~of the primary goals of factorization theory is to characterize and quantify the non-uniqueness of factorizations of monoid elements~\cite{nonuniq}.  To this end, one of the predominant invariants examined is a factorization's \emph{length}, which coincides with the 1-norm $z_1 + \cdots + z_k$ of $\zz$.  A cornerstone of their study is the so-called structure theorem for sets of length, which has been shown to hold for large families of monoids and provides a combinatorial description of the set of possible factorization lengths of elements of the form $x^n$ for large $n$; see~\cite{setsoflengthmonthly} for a survey of such results.  

In this manuscript, we consider two particular families of monoids.  The first, known as \emph{arithmetical congruence monoids}, are multiplicative submonoids of $\ZZ_{\ge 1}$ of the form
$$M_{a,b} = \{1\} \cup \{n \in \ZZ_{\ge 1} : n \equiv a \bmod b\}$$
for $a,b \ge 1$ satisfying $a^2 \equiv a \bmod b$.  For instance, in 
$$M_{1,4} = \{1, 5, 9, 13, \cdots\},$$
the elements 9, 21, and 49 are all atoms, and consequently $441 = 9 \cdot 49 = 21^2$ admits two distinct factorizations (this particular monoid is known as the Hilbert monoid, as he used it to exhibit non-unique factorization).  Since their introduction in~\cite{acmarithmetic}, arithmetical congruence monoids (ACMs for short) have garnered some attention in the factorization theory community as a source of ``naturally-occuring'' monoids with a surprising propensity to exhibit pathological factorization behavior; see the survey~\cite{acmsurvey} for an overview of the known factorization properties of ACMs and a number of lingering questions.  

The second family of monoids are \emph{numerical semigroups}, which are additive submonoids of $\ZZ_{\ge 0}$ with finite complement.  Any numerical semigroup $S$ has finitely many atoms $g_1, \ldots, g_k$, in which case we write
$$S = \<g_1, \ldots, g_k\> = \{z_1g_1 + \cdots + z_kg_k : z_1, \ldots, z_k \in \ZZ_{\ge 0}\}.$$
Note that, unlike ACMs, the operation on a numerical semigroup is \textbf{addition}; as such, a factorization of $n \in S$ is an additive expression of the form 
$$n = z_1g_1 + \cdots + z_kg_k$$
for some $\zz \in \ZZ_{\ge 0}^k$.  
Numerical semigroups have long arisen in countless setting across the mathematical spectrum, including combinatorics, algebra, number theory, polyhedral geometry, and discrete optimization; we direct the reader to the monographs~\cite{numericalappl,numerical} for a thorough introduction to numerical semigroups.  


In this manuscript, we consider notions of factorization length derived from $\ell_p$-norms for other values of $p \in \ZZ_{\ge 0} \cup \{\infty\}$.  More specifically, given a general monoid $M$ (written multiplicatively), the \emph{$p$-length} of the factorization in~\eqref{eq:firstfactorization} is given by 
$$\ell_p(\zz) = z_1^p + \cdots + z_k^p$$
if $p \in \ZZ_{\ge 0}$, which if $p > 0$ coincides with $(\|\zz\|_p)^p$, and 
$$\ell_\infty(\zz) = \|\zz\|_\infty = \max(z_1, \ldots, z_k).$$
In the case $p = 1$, one obtains the usual notion of factorization length.  

We focus our attention on the extremal $p$-lengths of factorizations for large monoid elements, drawing inspiration from the results of~\cite{asympminmaxlength} that, under minimal assumptions on the monoid $M$ and the element $x \in M$, the limits
$$
\lim_{n \to \infty} \frac{\sup L(x)}{n}
\qquad \text{and} \qquad
\lim_{n \to \infty} \frac{\min L(x)}{n}
$$
both exist, though the former need not be finite (here, $L(x)$ denotes the set of (classical) lengths of factorizations of $x$).  
Our study focuses on numerical semigroups (Section~\ref{sec:ns}), for which we derive asymptotic results of a combinatorial nature that are familiar in this setting~\cite{numericalsurvey}, and arithmetical congruence monoids (Section~\ref{sec:acms}), wherein our results demonstrate the subtlety of such asymptotic questions for more general monoids.





\section{Numerical semigroups}
\label{sec:ns}

Fix a numerical semigroup $S = \<g_1, \ldots, g_k\>$.  Given $\zz \in \ZZ^k$, define
$$
\ell_\infty(\zz) = \max\{z_1, \ldots, z_k\}
\quad \text{and} \quad
\ell_p(\zz) = z_1^p + \cdots + z_k^p
\quad \text{for} \quad
p \in \ZZ_{\ge 0},
$$
and for $p \in \ZZ_{\ge 0} \cup \{\infty\}$, define
$$
\ell_p^M(n) = \max\{\ell_p(\zz) : \zz \in \mathsf Z(n)\}
\quad \text{and} \quad
\ell_p^m(n) = \min\{\ell_p(\zz) : \zz \in \mathsf Z(n)\}
$$
for each $n \in S$, where
$$\mathsf Z(n) = \{\zz \in \ZZ_{\ge 0}^k : n = z_1 n_1 + \cdots + z_k n_k\}$$
is the \emph{set of factorizations} of $n$ in $S$.  

For a fixed numerical semigroup, the asymptotic behavior of $\ell_p^m(n)$ and $\ell_p^M(n)$ often take a particularly nice combinatorial form.  We say a function $f:\ZZ_{\ge 0} \to \RR$ is a \emph{quasipolynomial} if there exist periodic functions $c_d(n)$, $c_{d-1}(n)$, \ldots, $c_0(n)$, with $c_d$ not identically 0, such that 
$$f(n) = c_d(n) n^d + \cdots + c_1(n)n + c_0(n);$$
in this case, the \emph{degree} of $f$ is $d$ and the \emph{period} of $f$ is the least common multiple of the periods of the $c_i(n)$.  
We say $f$ is \emph{eventually quasipolynomial} if there exists a quasipolynomial $g(n)$ such that $f(n) = g(n)$ for all $n \gg 0$ (that is, for all but finitely many $n \in \ZZ_{\ge 0}$).  

Quasipolynomial functions often arise in the context of numerical semigroups; see~\cite{numericalsurvey} for a survey of such results, and~\cite{continuousdiscretely} for geometric interpretations of this phenomenon.  Theorems~\ref{t:l0andl1}, \ref{t:maxinf}, \ref{t:mininf}, \ref{t:maxp}, and~\ref{t:min2} imply each of the functions in Table~\ref{tb:eqpsummary} are eventually quasipolynomial functions of $n$ with the specified degree, period, and constant leading coefficient.  

\begin{table}[t]
\begin{center}
$\begin{array}{|l|l|l|l|}
\text{function} & \text{degree} & \text{period} & \text{leading coefficient}
\\ \hline
\ell_0^m(n)
& 0 & \lcm(g_1, \ldots, g_k) & 
\\[0.1em]
\ell_1^m(n)
& 1 & g_k & 1/g_k
\\[0.1em]
\ell_2^m(n)
& 2 & g_1^2 + \cdots + g_k^2 & 1/(g_1^2 + \cdots + g_k^2)
\\[0.1em]
\ell_\infty^m(n)
& 1 & g_1 + \cdots + g_k & 1/(g_1 + \cdots + g_k)
\\[0.2em] \hline
\ell_0^M(n)
& 0 & 1 & k
\\[0.1em]
\ell_{p \ge 1}^M(n)
& p & g_1 & 1/g_1
\\[0.2em]
\ell_\infty^M(n)
& 1 & g_1 & 1/g_1
\\ \hline
\end{array}$
\end{center}
\caption{The eventually quasipolynomial attributes of $\ell_p^m(n)$ and $\ell_p^M(n)$ for $p \in [0, \infty]$ over a numerical semigroup $S = \<g_1, \ldots, g_k\>$.}
\label{tb:eqpsummary}
\end{table}

Results for $p = 0$ and $p = 1$ appeared in~\cite{quasi0norm,elastsets}; we state them here for completeness.  In what follows, let 
$$F(S) = \max(\ZZ_{\ge 0} \setminus S)$$
denote the \emph{Frobenius number} of $S$.  

\begin{thm}\label{t:l0andl1}
Fix a numerical semigroup $S = \<g_1, \ldots, g_k\>$.  
\begin{enumerate}[(a)]
\item 
For $n > (g_1 - 1)g_k$, we have
$$
\ell_1^m(n) = \ell_1^m(n - g_k) + 1
$$
As such, $\ell_1^m(n)$ is eventually quasilinear with period $g_k$ and leading coefficient $\tfrac{1}{g_k}$.  

\item 
For $n > (g_{k-1} - 1)g_k$, we have
$$
\ell_1^M(n) = \ell_1^M(n - g_1) + 1
$$
As such, $\ell_1^M(n)$ is eventually quasilinear with period $g_1$ and leading coefficient $\tfrac{1}{g_1}$.  

\item 
For $n > g_k^2$, $\ell_0^m(n)$ is periodic with period $\lcm(g_1, \ldots, g_k)$.  

\item 
For $n > F(S) + g_1 + \cdots + g_k$, we have $\ell_0^M(n) = k$.  

\end{enumerate}
\end{thm}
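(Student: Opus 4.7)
The plan is to handle the four parts in order via swap arguments on factorizations for (a) and (b), a periodicity argument for (c), and a direct Frobenius-number construction for (d). Throughout I assume $g_1 < g_2 < \cdots < g_k$.

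For part (a), I would prove the recursion by two inequalities. The direction $\ell_1^m(n) \le \ell_1^m(n-g_k) + 1$ follows by appending one copy of $g_k$ to any minimum-length factorization of $n - g_k$. For the reverse, it suffices to show that some minimum-length factorization of $n$ satisfies $z_k \ge 1$, since removing a single $g_k$ then yields a factorization of $n - g_k$ of length $\ell_1^m(n) - 1$. The key swap is this: if $\zz$ is a factorization of $n$ with $z_k = 0$ and $z_i \ge g_k$ for some $i < k$, then exchanging $g_k$ copies of $g_i$ for $g_i$ copies of $g_k$ decreases length by $g_k - g_i > 0$, contradicting minimality. The hypothesis $n > (g_1-1)g_k$ is engineered to force this situation; I expect the cleanest verification routes through the observation that $\lfloor n/g_k \rfloor \ge g_1 - 1$, followed by a careful comparison of the length of $\zz$ with the factorization that uses $g_k$ as many times as possible. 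Once the recursion holds, iterating within each residue class modulo $g_k$ yields eventual quasilinearity with period $g_k$ and leading coefficient $1/g_k$. Part (b) is entirely parallel, with the dual swap exchanging $g_1$ copies of $g_i$ (for $i > 1$) with $g_i$ copies of $g_1$, which increases length by $g_i - g_1 > 0$ and forces every maximum-length factorization of sufficiently large $n$ to use $g_1$.

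For part (c), let $L = \lcm(g_1, \ldots, g_k)$. The plan is to show $\ell_0^m(n + L) = \ell_0^m(n)$ for $n$ large. The easy direction $\ell_0^m(n+L) \le \ell_0^m(n)$ comes from taking a factorization of $n$ with minimal support $I$ and adding $L/g_j$ copies of any $g_j$ with $j \in I$, producing a factorization of $n+L$ with the same support. The opposite inequality is subtler: given a factorization of $n + L$ with minimal support $J$ and coefficients $z'_j$, we need to subtract $L$ while remaining supported in $J$. If some $z'_j \ge L/g_j$ we decrease that coefficient; otherwise the pigeonhole bound $n + L < k \cdot L$ holds, giving a uniform upper bound on $n$. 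The main obstacle I anticipate is sharpening this naive estimate down to the stated threshold $n > g_k^2$, which presumably requires an Ap\'ery-style analysis to bound the coefficients in minimum-support factorizations of $n + L$ (and is plausibly the contribution of the cited source).

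Part (d) is the most direct: for $n > F(S) + g_1 + \cdots + g_k$, the element $n - (g_1 + \cdots + g_k)$ exceeds $F(S)$ and hence lies in $S$, so it admits some factorization $\sum z_i g_i$ with $\zz \in \ZZ_{\ge 0}^k$. Adding one copy of each atom yields $n = \sum (z_i + 1)g_i$, a factorization whose support is all of $\{1, \ldots, k\}$. Since any factorization uses at most $k$ distinct atoms, $\ell_0^M(n) = k$.
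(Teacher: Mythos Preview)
Your argument for part (d) is exactly what the paper does: the one-line observation that $n - (g_1 + \cdots + g_k) > F(S)$ forces a factorization of $n$ using every generator. For parts (a), (b), and (c), however, the paper gives no self-contained argument at all; it simply cites \cite[Theorems~4.2 and~4.3]{elastsets} for (a)--(b) and \cite[Theorem~12]{quasi0norm} together with the Frobenius bound from~\cite{diophantinefrob} for (c). So your proposal is not so much a different route as an attempt to reprove the cited results from scratch.

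Your swap arguments for (a) and (b) are indeed the standard mechanism behind those cited theorems, and the periodicity argument you outline for (c) is the right shape. You are also honest about where the difficulties lie: in (a) and (b) the trade $g_k \cdot g_i \leftrightarrow g_i \cdot g_k$ shows that an extremal factorization must have all coordinates below $g_k$ (resp.\ $g_1$), but converting that into the precise thresholds $(g_1-1)g_k$ and $(g_{k-1}-1)g_k$ requires an Ap\'ery-set bookkeeping step that you only gesture at; and in (c) your pigeonhole estimate gives a bound of order $kL$ rather than $g_k^2$, so the sharp threshold again rests on the cited source. None of this is a flaw in your reasoning---it accurately reflects that the exact constants are the nontrivial content of the references---but if you want a fully self-contained proof you would need to fill in those Ap\'ery computations rather than defer them.
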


\begin{proof}
Parts~(a) and~(b) follow from \cite[Theorem~4.3]{elastsets} and \cite[Theorem~4.2]{elastsets}, respectively.  Additionally, part~(c) follows from \cite[Theorem~12]{quasi0norm} and the bounds on $F(S)$ in~\cite{diophantinefrob}.  Lastly, part~(d) follows from the fact that every specified values of $n$ has a factorization involving all of the generators of $S$.  
\end{proof}

Next, we consider $p = \infty$.  The results of~\cite{structurethmns} strengthen Theorem~\ref{t:l0andl1}(a) and~(b) to include an interpretation of the values the periodic constant coefficient takes.  Analogously, in addition to proving $\ell_\infty^m(n)$ and $\ell_\infty^M(n)$ are eventually quasipolynomial and determining their leading coefficients, we identify an interpretation of the values taken by their respective periodic constant terms.  

The \emph{Ap\'ery set} of a numerical semigroup $S$ with respect to an element $m \in S$ is 
$$\Ap(S; n) = \{n \in S : n - m \notin S\}.$$
It is known that $\Ap(S;n)$ is comprised of the smallest element of each equivalence class modulo $n$, that is,
$\Ap(S; n) = \{0, a_1, \ldots, a_{n-1}\},$
where $a_i$ is the smallest element of $S$ with the property $a_i \equiv i \bmod n$ (see~\cite{numericalappl}).  

In what follows, let
$$g = g_1 + \cdots + g_k.$$

\begin{lemma}\label{l:infmin}
For any $n \in S$ and $c \in \ZZ_{\ge 1}$, if $n > cg$, then $\ell_\infty^m(n) > c$.
\end{lemma}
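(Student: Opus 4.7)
The plan is to prove the contrapositive: assume $\ell_\infty^m(n) \le c$ and deduce $n \le cg$. By the definition of $\ell_\infty^m$, the hypothesis $\ell_\infty^m(n) \le c$ furnishes some factorization $\zz = (z_1, \ldots, z_k) \in \mathsf Z(n)$ whose largest coordinate satisfies $\max_i z_i \le c$. I would then substitute the coordinatewise bound $z_i \le c$ into the defining identity $n = z_1 g_1 + \cdots + z_k g_k$ to obtain $n \le c(g_1 + \cdots + g_k) = cg$, which is the contrapositive of the statement.

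I do not anticipate any obstacle: the argument is essentially a one-line unwinding of the definitions of $\ell_\infty$ and of a factorization, together with the elementary observation that a nonnegative integer combination of $g_1, \ldots, g_k$ whose coefficients are each at most $c$ can be at most $c$ times the sum of the generators. Structurally, the lemma is a lower bound half of the expected asymptotic $\ell_\infty^m(n) \sim n/g$, so I expect it to be paired later with a matching construction (a factorization achieving $\ell_\infty \approx n/g$ for all sufficiently large $n$) in order to obtain the eventually quasilinear description of $\ell_\infty^m(n)$ recorded in Table~\ref{tb:eqpsummary}.
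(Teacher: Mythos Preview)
Your proof is correct and is essentially identical to the paper's: both argue the contrapositive by taking a factorization $\zz \in \mathsf Z(n)$ with $\ell_\infty(\zz) \le c$ and bounding $n = \sum z_i g_i \le c \sum g_i = cg$.
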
 

\begin{proof}
Suppose $\ell_\infty^m(n) \le c$.  Some factorization $\zz \in \mathsf Z(n)$ must have $z_i \le c$ for all $i$, so 
$$n = z_1g_1 + \cdots + z_kg_k \le cg_1 + cg_2 + \cdots + cg_k = cg.$$
The claim now follows.  
\end{proof}

\begin{lemma}\label{l:infmax}
Fix $n \in S$ with $n > g_1^2 g$.  If a factorization $\zz \in \mathsf Z(n)$ has $\ell_\infty(\zz) = z_i$ for $i \in \{2, 3, \cdots, k\}$, then there exists a factorization $\zz' \in \mathsf Z(n)$ such that $\ell_\infty(\zz') > \ell_\infty(\zz)$. 
\end{lemma}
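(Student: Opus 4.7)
The plan is to produce $\zz'$ by iteratively performing a coordinate-swapping ``trade'' on $\zz$: replace $g_1$ copies of $g_i$ by $g_i$ copies of $g_1$. In coordinates, one trade sends $\zz$ to a factorization with $z_1 \mapsto z_1 + g_i$ and $z_i \mapsto z_i - g_1$, fixing the other coordinates; since $g_1 g_i = g_i g_1$, this preserves $n$, and the trade is legal whenever the current $z_i \ge g_1$. Writing $z_i = t g_1 + r$ with $t = \lfloor z_i / g_1 \rfloor$ and $0 \le r \le g_1 - 1$, I apply the trade $t$ times to obtain $\zz' \in \mathsf{Z}(n)$ with $z'_1 = z_1 + t g_i$, $z'_i = r \ge 0$, and $z'_j = z_j$ for $j \notin \{1, i\}$. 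Showing $z'_1 > z_i$ will then give $\ell_\infty(\zz') \ge z'_1 > z_i = \ell_\infty(\zz)$, as required.

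Substituting $t = (z_i - r)/g_1$ gives
$$z'_1 - z_i = \frac{g_1 z_1 + (g_i - g_1) z_i - g_i r}{g_1}.$$
Since $z_i$ is the maximum coordinate of $\zz$, $n = \sum_j z_j g_j \le z_i g$, so the hypothesis $n > g_1^2 g$ forces $z_i > g_1^2$, i.e., $z_i \ge g_1^2 + 1$. Using $z_1 \ge 0$ and $r \le g_1 - 1$, positivity of the numerator reduces to $(g_i - g_1) z_i > g_i(g_1 - 1)$, equivalently $z_i > g_i(g_1 - 1)/(g_i - g_1)$. The function $g \mapsto g(g_1 - 1)/(g - g_1)$ is nonincreasing for $g > g_1$, so across all admissible $g_i$ the right-hand side is maximized at $g_i = g_1 + 1$, where it equals $g_1^2 - 1$. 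Thus $z_i \ge g_1^2 + 1 > g_1^2 - 1$ closes the estimate.

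The main obstacle is the tightness of this calibration: the hypothesis $n > g_1^2 g$ provides only $z_i > g_1^2$, which is barely enough to beat the worst-case bound $g_1^2 - 1$ arising from $g_i = g_1 + 1$. Any looser bookkeeping on $\lfloor z_i/g_1 \rfloor$ (for instance, the crude $\lfloor x \rfloor > x - 1$) loses a multiplicative factor of $g_i/g_1$ and fails in this borderline case, so the residue $r$ must be tracked explicitly. The degenerate case $g_1 = 1$ requires no separate treatment, since then $g_i(g_1 - 1)/(g_i - g_1) = 0$ and any positive $z_i$ suffices.
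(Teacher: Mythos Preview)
Your proof is correct and follows essentially the same approach as the paper: both write $z_i = tg_1 + r$, perform $t$ trades to obtain $\zz'$ with $z_1' = z_1 + tg_i$ and $z_i' = r$, use $n \le z_i g$ together with $n > g_1^2 g$ to get $z_i > g_1^2$, and then verify $z_1' > z_i$. The only difference is cosmetic: where you compute $z_1' - z_i$ as a fraction and optimize over $g_i$, the paper uses the shorter chain $z_1 + tg_i \ge tg_i \ge t(g_1+1) = tg_1 + t \ge tg_1 + g_1 > tg_1 + r = z_i$, invoking $t \ge g_1$ directly.
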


\begin{proof}
We have $n > g_1^2 g$, so by Lemma~\ref{l:infmin}, $\ell(\zz) = z_i > g_1^2$.  Write $z_i = qg_1 + r$ for $q, r \in \ZZ$ with $0 \leq r < g_1$, so in particular $q \ge g_1$.  
Trading $qg_1$ copies of $g_i$ for $qg_i$ copies of $g_1$, we obtain a factorization $\zz' \in \mathsf Z(n)$ with $z_1' = z_1 + qg_i$, $z_i' = z_i - qg_1 = r \ge 0$, and $z_j' = z_j$ for all other $j$.  
We then readily check
$$
z_1 + qg_i
\ge qg_i
\ge q(g_1 + 1)
= qg_1 + q
\ge qg_1 + g_1
> qg_1 + r
= z_i,
$$
so $z_1' > z_i$, and therefore $\ell_\infty(\zz') \ge z_1' > z_i = \ell_\infty(\zz)$.
\end{proof}



\begin{thm}\label{t:maxinf}
Write $\Ap(S; g_1) = \{a_0, a_1, a_2, \ldots\}$ where each $a_j \equiv j \bmod g_1$. 
For all $n \in S$ with $n > g_1^2 g$ and $n \equiv i \bmod g_1$, we have
\[
\ell_\infty^M(n) = \tfrac{1}{g_1}(n - a_i).
\]
In particular, for all $n > g_1^2 g$, we have
\[
\ell_\infty^M(n) = \ell_\infty^M(n - g_1) + 1.
\]
\end{thm}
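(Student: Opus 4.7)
The plan is to reduce $\ell_\infty^M(n)$ to the maximization of the first coordinate $z_1$ over all factorizations of $n$, and then to identify that maximum via the Ap\'ery set $\Ap(S; g_1)$.

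First I would prove that $\ell_\infty^M(n) = \max\{z_1 : \zz \in \mathsf Z(n)\}$ whenever $n > g_1^2 g$. For the $\le$ direction, suppose $\zz \in \mathsf Z(n)$ attains $\ell_\infty^M(n)$. If $\ell_\infty(\zz) = z_i$ for some $i \in \{2, \ldots, k\}$, then Lemma~\ref{l:infmax} produces a factorization $\zz' \in \mathsf Z(n)$ with $\ell_\infty(\zz') > \ell_\infty(\zz) = \ell_\infty^M(n)$, a contradiction. Hence the maximum entry of $\zz$ is attained only at $z_1$, so $\ell_\infty^M(n) = z_1 \le \max\{z_1 : \zz \in \mathsf Z(n)\}$. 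The opposite inequality is trivial, since $\ell_\infty(\zz^*) \ge z_1^*$ for any $\zz^*$ maximizing $z_1^*$.

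Next I would evaluate $\max\{z_1 : \zz \in \mathsf Z(n)\}$ using $\Ap(S; g_1)$. Let $S'$ denote the submonoid of $S$ consisting of non-negative integer combinations of $g_2, \ldots, g_k$. Maximizing $z_1$ over $\mathsf Z(n)$ is equivalent to minimizing $n - z_1 g_1$ subject to $n - z_1 g_1 \in S'$. Because every element of $S$ congruent to $i$ modulo $g_1$ is at least $a_i$, one obtains $n - z_1 g_1 \ge a_i$. This bound is sharp: $a_i$ itself lies in $S'$, since any factorization of $a_i$ with first coordinate $\ge 1$ would yield $a_i - g_1 \in S$, contradicting the definition of $a_i \in \Ap(S; g_1)$. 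Thus the minimum equals $a_i$, giving $\max\{z_1\} = (n - a_i)/g_1$. The recurrence then follows by applying this closed form to both $n$ and $n - g_1$, which share residue $i$ modulo $g_1$.

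The main obstacle is the reduction step: one must leverage the asymmetry in Lemma~\ref{l:infmax}---which only permits trades that increase $z_1$ at the expense of other coordinates---to force any $\ell_\infty^M$-maximizer to concentrate its peak exclusively on the first coordinate, thereby collapsing the problem to the purely coordinate-wise optimization $\max z_1$.
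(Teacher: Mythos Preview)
Your proposal is correct and follows essentially the same route as the paper: use Lemma~\ref{l:infmax} to reduce $\ell_\infty^M(n)$ to $\max\{z_1 : \zz \in \mathsf Z(n)\}$, then identify this maximum via the Ap\'ery set $\Ap(S;g_1)$. The only cosmetic difference is that you introduce the submonoid $S' = \langle g_2, \ldots, g_k\rangle$ explicitly and argue $a_i \in S'$, whereas the paper phrases the same step as ``$n - z_1 g_1 \in \Ap(S;g_1)$'' directly; the underlying reasoning is identical.
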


\begin{proof}
Suppose $\zz \in \mathsf Z(n)$ has maximal $\infty$-norm.  Since $n > g_1^2 g$, Lemma~\ref{l:infmax} implies
$$\ell_\infty^M(n) = \max \{z_1' : \zz' \in \mathsf Z(n)\}.$$
and in particular  $\ell_\infty(\zz) = z_1 \ge z_1'$ for all $\zz' \in \mathsf Z(n)$.  This means $n - z_1g_1 \in \Ap(S;g_1)$.  Comparing equivalence classes modulo $g_1$, this means $n - z_1g_1 = a_i$, and solving yields
$$\ell_\infty^M(n) = \tfrac{1}{g_1}(n - a_i).$$
The final claim now immediately follows.  
\end{proof}

\begin{lemma}\label{l:mininfapery}
If $a \in \Ap(S;g)$, then $\ell_\infty^m(a) < g$.
\end{lemma}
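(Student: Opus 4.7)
The plan is to prove the contrapositive: assume $\ell_\infty^m(a) \ge g$ and derive $a - g \in S$, which contradicts $a \in \Ap(S;g)$. Since $g \in S$ trivially, the Ap\'ery set definition then gives the lemma.

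Fix any factorization $\zz \in \mathsf Z(a)$. By hypothesis, $\ell_\infty(\zz) \ge \ell_\infty^m(a) \ge g$, so some coordinate $z_i$ satisfies $z_i \ge g$. The goal is now to modify $\zz$ into another factorization $\zz' \in \mathsf Z(a)$ with $z_j' \ge 1$ for every $j$; once this is accomplished, subtracting 1 from each coordinate gives
\[
a - g = \sum_{j=1}^k (z_j' - 1)\, g_j,
\]
a nonnegative integer combination of generators, so $a - g \in S$.

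The construction of $\zz'$ uses the standard trade $g_j \cdot g_i = g_i \cdot g_j$: replacing $g_j$ copies of $g_i$ by $g_i$ copies of $g_j$ is valid whenever the current value of $z_i$ is at least $g_j$, and produces a new factorization of $a$ with $z_i$ decreased by $g_j$ and $z_j$ increased by $g_i$. For each index $j \neq i$ with $z_j = 0$, I would perform exactly one such trade, in any fixed order. The total decrease in $z_i$ across all trades is at most $\sum_{j \neq i} g_j = g - g_i$, so the inequality $z_i \ge g$ guarantees that $z_i$ stays at least $g_i \ge 1$ at every intermediate step (hence each trade's requirement $z_i \ge g_j$ is met, since $g_j \le g - g_i$). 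After completing all trades, every coordinate formerly equal to $0$ has become $g_i \ge 1$, every other coordinate with $j \neq i$ is unchanged and positive, and $z_i$ itself remains $\ge g_i \ge 1$.

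The only real obstacle is the bookkeeping on $z_i$ during the sequence of trades, and the bound $z_i \ge g = g_i + \sum_{j \neq i} g_j$ is tight enough to carry it through. Conceptually there is no difficulty; the hypothesis $\ell_\infty^m(a) \ge g$ is precisely what makes one coordinate large enough to bankroll the trades needed to make every other coordinate positive.
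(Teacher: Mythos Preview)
Your approach is the same as the paper's: argue the contrapositive by producing, from a factorization with some $z_i \ge g$, a new factorization of $a$ with every coordinate positive, whence $a - g \in S$. The paper carries out the trade in a single stroke, setting $z_i' = z_i - (g - g_i)$ and $z_j' = z_j + g_i$ for $j \neq i$, then checking directly that $\zz' \in \mathsf Z(a)$ with all entries at least $g_i \ge 1$; this avoids any intermediate-step bookkeeping.

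Your iterative version is fine in spirit, but the parenthetical justification has a small slip. You correctly show that the running value of $z_i$ stays $\ge g_i$ throughout, and then write ``hence each trade's requirement $z_i \ge g_j$ is met, since $g_j \le g - g_i$.'' But $z_i \ge g_i$ together with $g_j \le g - g_i$ does not yield $z_i \ge g_j$ (consider $i = 1$, so $g_i < g_j$ for every $j > 1$). The correct bound is sharper: before performing the trade for index $j$, the trades already done are for some set $J' \subseteq \{1,\ldots,k\} \setminus \{i,j\}$, so the current value of $z_i$ is at least
\[
g - \sum_{j' \in J'} g_{j'} \;\ge\; g - (g - g_i - g_j) \;=\; g_i + g_j \;\ge\; g_j.
\]
With this adjustment your argument goes through; the paper's one-shot construction simply sidesteps the issue.
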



\begin{proof}
Suppose $\ell_\infty^m(a) \ge g$. Then there is a factorization $\zz \in \mathsf Z(a)$ such that $z_i \ge g$ for some $i$.  This means $\zz' \in \mathsf Z(a)$ given by
$$
z_j' = \begin{cases}
z_j + g_i - g & \text{if } j = i;
\\
z_j + g_i & \text{otherwise,}
\end{cases}
$$
is a factorization of $a$ since
$$
(z_i + g_i - g)g_i + \sum_{j \ne i} (z_j + g_i) g_j
= -g_i g + \sum_{i = 1}^k (z_j + g_i)g_j
= -g_i g + a + g_i g
= a.
$$
Since every coordinate of $\zz'$ is positive, $a - g \in S$, and thus $a \notin \Ap(S;g)$.  
\end{proof}

 
\begin{thm}\label{t:mininf}
Write $\Ap(S; g) = \{0, a_1, \ldots, a_{g-1}\}$ with each $a_j \equiv j \bmod g$.  Fix $n \in S$, and fix $i \in \{0, 1, \cdots, g-1\}$ so that $i \equiv -n \bmod g$. Then 
\[\ell_\infty^m(n) = \tfrac{1}{g}(n + a_i)\]
for all $n > g^2$.  In particular, for all $n > g^2$, we have
$$\ell_\infty^m(n) = \ell_\infty^m(n - g) + 1.$$
\end{thm}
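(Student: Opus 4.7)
The plan is to prove the formula $\ell_\infty^m(n) = c$, where $c = \tfrac{1}{g}(n+a_i)$ (an integer, since $a_i + n \equiv 0 \bmod g$), by a sandwich argument in the spirit of Theorem~\ref{t:maxinf}. The hypothesis $n > g^2$ ensures $c > g$, and this will be the key inequality used in the upper bound.

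For the lower bound, I would take any $\zz \in \mathsf Z(n)$ and set $M = \ell_\infty(\zz)$. Each $M - z_j$ is nonnegative, so
$$Mg - n = \sum_{j=1}^k (M - z_j)\, g_j \in S,$$
and moreover $Mg - n \equiv -n \equiv i \bmod g$. Since $a_i$ is by definition the smallest element of $S$ in that residue class, $Mg - n \ge a_i$, which rearranges to $M \ge c$. This shows $\ell_\infty^m(n) \ge c$.

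For the upper bound, I would apply Lemma~\ref{l:mininfapery} to $a_i \in \Ap(S;g)$ to obtain a factorization $\bb \in \mathsf Z(a_i)$ with $b_j < g$ for every $j$. Because $c > g > b_j$, setting $z_j = c - b_j$ produces a tuple in $\ZZ_{\ge 0}^k$, and the identity $\sum_j z_j g_j = cg - a_i = n$ confirms $\zz \in \mathsf Z(n)$, with $\ell_\infty(\zz) = \max_j(c - b_j) \le c$. Combined with the lower bound, this forces $\ell_\infty^m(n) = c$. The recurrence $\ell_\infty^m(n) = \ell_\infty^m(n-g) + 1$ is then immediate, since $n$ and $n-g$ share the same residue $i$ modulo $g$, so the formula applied to each gives a difference of exactly one. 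The main obstacle is spotting the slick identity $Mg - n \in S$ for the lower bound; once that and Lemma~\ref{l:mininfapery} are in hand, the rest is routine bookkeeping.
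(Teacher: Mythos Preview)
Your proof is correct and matches the paper's approach: both obtain the upper bound by subtracting a factorization of $a_i$ with all entries below $g$ (supplied by Lemma~\ref{l:mininfapery}) from the constant tuple $(c,\ldots,c)$, and both obtain the lower bound from the observation that $(M,\ldots,M)-\zz$ is a nonnegative factorization of $Mg-n$, an element of $S$ in the residue class of $a_i$ modulo $g$. The only cosmetic difference is that you phrase the lower bound as a direct inequality via the minimality of $a_i$ in its residue class, whereas the paper argues the same point by contradiction from the Ap\'ery condition.
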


\begin{proof}
Fix $q \in \ZZ$ so that $n = qg - a_i$.  We claim $\ell_\infty^m(n) = q$.  

We then get that $\ell_\infty^m(n + a_i) = l_\infty(qg) = q$, which is achieved by the factorization $(q, q, \ldots, q) \in \mathsf Z(qg)$.  Now, any factorization $\zz' \in \mathsf Z(a_i)$ has $z_j < g$ for each $j$ by Lemma~\ref{l:mininfapery}, and since $n > g^2$, we must have $g < q$, so
$$(q, q, \ldots, q) - \zz' \in \mathsf Z(qg - a_i) = \mathsf Z(n),$$
and in particular $\ell_\infty^m(n) \le q$.


Next, suppose by way of contradiction that $\ell_\infty^m(n) < q$.  Then there is a factorization $\zz \in \mathsf Z(n)$ with $z_j < q$ for every $j$.  However, this implies
$$(q, q, \ldots, q) - \zz \in \mathsf Z(qg - n) = \mathsf Z(a_i)$$
which is impossible since this factorization has no nonzero entries and $a_i \in \Ap(S;g)$.  As such, we conclude 
$$\ell_\infty^m(n) = q = \tfrac{1}{g}(n + a_i),$$
from which the final claimed equality immediately follows.  
\end{proof}

Lastly, we next turn our attention to $2 \le p < \infty$, for which the asymptotic form of $\ell_p^M(n)$ is again quasipolynomial, while the asymptotic form of $\ell_p^m(n)$ is more nuanced.  

\begin{lemma}\label{l:maxp}
Fix $p \in \ZZ_{\ge 2}$.  For all $n \gg 0$, $\ell_p^M(n)$ is achieved by a factorization of $n$ with maximal first coordinate.  
\end{lemma}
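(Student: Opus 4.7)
The plan is to show that for $n \gg 0$, no factorization $\zz \in \mathsf Z(n)$ with $z_1$ strictly smaller than the maximum possible first coordinate $z_1^* = (n-a_i)/g_1$ can attain $\ell_p^M(n)$ (where $a_i \in \Ap(S;g_1)$ is determined by $n \equiv i \bmod g_1$, as in Theorem~\ref{t:maxinf}). The intuition is that for $p \ge 2$, $\ell_p$ is strictly convex and so rewards concentrating mass on a single coordinate, and the smallest generator $g_1$ permits the greatest concentration.

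The first step is to establish the pointwise bound
\[
\ell_p(\zz) \le z_1^p + \Bigl(\tfrac{n - z_1 g_1}{g_2}\Bigr)^p =: f(z_1),
\]
valid for any $\zz \in \mathsf Z(n)$. This combines the elementary inequality $\sum_{j \ge 2} z_j^p \le (\sum_{j \ge 2} z_j)^p$ (which follows from $z_j \le \sum_i z_i$ for nonnegative reals and $p \ge 1$) with $\sum_{j \ge 2} z_j \le (n - z_1 g_1)/g_2$ (since $g_j \ge g_2$ for $j \ge 2$).

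Next, viewing $f$ as a function of $z_1 \in [0, n/g_1]$, I observe that $f$ is a sum of two convex functions (for $p \ge 2$) and therefore attains its maximum on any subinterval at an endpoint. Applied to $[0, z_1^* - 1]$, this yields the endpoint values $f(0) = (n/g_2)^p$ and $f(z_1^* - 1) = (z_1^* - 1)^p + ((a_i + g_1)/g_2)^p$. I then compare both to $\ell_p(\zz^*) \ge (z_1^*)^p$ for any $\zz^* \in \mathsf Z(n)$ with first coordinate $z_1^*$: since $g_1 < g_2$, the ratio $(n/g_2)^p / (z_1^*)^p$ tends to $(g_1/g_2)^p < 1$, so $f(0) < (z_1^*)^p$ for $n \gg 0$; and the expansion $(z_1^*)^p - (z_1^* - 1)^p = p(z_1^*)^{p-1} + O((z_1^*)^{p-2})$ grows as $\Theta(n^{p-1})$, eventually dominating the bounded correction $((a_i + g_1)/g_2)^p$. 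Hence both endpoint values of $f$ fall strictly below $\ell_p(\zz^*)$ for $n$ sufficiently large, and by convexity this holds throughout $[0, z_1^* - 1]$. Consequently no $\zz$ with $z_1 \le z_1^* - 1$ achieves $\ell_p^M(n)$, proving the lemma.

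The main technical obstacle I anticipate is controlling the argument uniformly across residue classes, but since $\Ap(S; g_1)$ is finite the constants $a_i$ are uniformly bounded, so the $n \to \infty$ comparisons kick in uniformly; a close second is verifying that the bound $\sum z_j^p \le (\sum z_j)^p$ is strong enough, but it is tight precisely at the configurations (mass on a single coordinate) that the max-first-coordinate factorization $\zz^*$ nearly realizes, which is why the comparison succeeds.
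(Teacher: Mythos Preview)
Your proof is correct and follows the same overall structure as the paper's: bound $\ell_p(\zz)$ above by $f(z_1)=z_1^p+\bigl((n-z_1g_1)/g_2\bigr)^p$, use convexity of $f$ to reduce the maximum over $z_1\in[0,z_1^*-1]$ to the two endpoints, and then compare each endpoint to $(z_1^*)^p\le\ell_p(\zz^*)$ asymptotically. The one substantive difference is in how the bound $f(z_1)$ is obtained: the paper invokes a calculus argument that the maximum of $x_2^p+\cdots+x_k^p$ on the simplex $\sum_{j\ge 2}x_jg_j=n-z_1g_1$ occurs at the vertex $(\,(n-z_1g_1)/g_2,0,\ldots,0\,)$, whereas you reach the same bound via the elementary chain $\sum_{j\ge 2}z_j^p\le\bigl(\sum_{j\ge 2}z_j\bigr)^p\le\bigl((n-z_1g_1)/g_2\bigr)^p$. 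Your route is cleaner and entirely self-contained, avoiding the Lagrange-multiplier step; otherwise the two arguments are interchangeable.
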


\begin{proof}
Fix $a \in \Ap(S;g_1)$, and consider $n \in S$ with $n \equiv a \bmod g_1$.  Writing $n = qg_1 + a$, it is the case that $q$ is the largest first coordinate occuring in any factorization of $n$.  Fix any factorization $\zz \in \mathsf Z(n)$ with $z_1 = q$, and fix $\zz' \in \mathsf Z(n)$ with $z_1' < q$.  We seek to prove that for $q \gg 0$, we have $\ell_p(\zz) \ge \ell_p(\zz')$.  

A simple calculus exercise verifies that the maximum of $x_2^p + \cdots + x_k^p$ for $\xx \in \RR_{\ge 0}^{k-1}$ subject to the constraint $x_2g_2 + \cdots + x_kg_k = n$ occurs when $\xx = (\tfrac{1}{g_2}n, 0, \ldots, 0)$.  As~such, letting $c = z_1' - q$ and noting that $(z_2', \ldots, z_k')$ is a factorization of $n - z_1'g_1$ in the semgiroup $\<g_2, \ldots, g_k\>$, we see 
\begin{equation}\label{eq:maxpbound}
\ell_p(\zz')
\le (q - c)^p + (\tfrac{1}{g_2}(n - z_1'g_1))^p
= (q - c)^p + (\tfrac{1}{g_2}(a + cg_1))^p.
\end{equation}
Now, for $q$ sufficiently large, 
$$\ell_p(\zz) \ge q^p \ge (q - 1)^p + (\tfrac{a}{g_2} + \tfrac{g_1}{g_2})^p$$
since $p \ge 2$ and $(\tfrac{a}{g_2} + \tfrac{g_1}{g_2})^p$ is constant.  As such, to complete the proof, it suffices to show that the right hand side of~\eqref{eq:maxpbound} is maximized over real $c \in [1, q]$ when $c = 1$.  Again using methods from calculus, there is a unique local extremum in $[1, q]$, and it is a local minimum, so the maximum value must be attained at either $c = 1$ or $c = q$.  Indeed, for $q$ sufficiently large, we obtain
$$(q - 1)^p + (\tfrac{a}{g_2} + \tfrac{g_1}{g_2})^p \ge (\tfrac{a}{g_2} + q\tfrac{g_1}{g_2})^p$$
as $g_2 > g_1$ ensures $(q - 1)^p - (\tfrac{a}{g_2} + q\tfrac{g_1}{g_2})^p$ eventually surpasses the constant $(\tfrac{a}{g_2} + \tfrac{g_1}{g_2})^p$.  
\end{proof}

\begin{thm}\label{t:maxp}
If $p \in \ZZ_{\ge 2}$, then $\ell_p^M(n)$ is eventually quasipolynomial with degree $p$, period $g_1$, and constant leading coefficient $1/g_1^p$.  
\end{thm}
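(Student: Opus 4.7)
The plan is to use Lemma~\ref{l:maxp} to reduce the computation of $\ell_p^M(n)$ to a maximization whose dependence on $n$ is entirely concentrated in a single coordinate, and then read off a quasipolynomial expression by stratifying $n$ according to its residue class modulo $g_1$.

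First, I would record the following structural observation mirroring the setup of Theorem~\ref{t:maxinf}: writing $\Ap(S; g_1) = \{a_0, a_1, \ldots, a_{g_1 - 1}\}$ with $a_i \equiv i \bmod g_1$, any factorization $\zz \in \mathsf Z(n)$ of $n \equiv i \bmod g_1$ must satisfy $n - z_1 g_1 \in S$, and since $a_i$ is the minimum element of $S$ in its residue class, the largest possible value of $z_1$ is $q_i(n) := \tfrac{1}{g_1}(n - a_i)$. Moreover, because $a_i \in \Ap(S;g_1)$ forces $a_i - g_1 \notin S$, the element $a_i$ admits at least one factorization with first coordinate zero; hence at least one factorization of $n$ achieves $z_1 = q_i(n)$, and such factorizations are in bijection with factorizations of $a_i$ having $z_1 = 0$.

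Next, I would invoke Lemma~\ref{l:maxp} to conclude that for all $n \gg 0$, the maximum of $\ell_p$ over $\mathsf Z(n)$ is attained by some factorization with $z_1 = q_i(n)$. Combining this with the bijection above gives
$$\ell_p^M(n) = q_i(n)^p + C(a_i), \qquad \text{where } C(a) := \max\{\ell_p(\zz) : \zz \in \mathsf Z(a),\ z_1 = 0\}.$$
Here $C(a_i)$ is a constant depending only on the residue class $i \bmod g_1$. Expanding $q_i(n)^p = \tfrac{1}{g_1^p}(n - a_i)^p$ via the binomial theorem then shows that on each residue class mod $g_1$, the function $\ell_p^M(n)$ agrees with a polynomial in $n$ of degree $p$ whose leading coefficient is $1/g_1^p$, independent of $i$. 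This is precisely the statement that $\ell_p^M(n)$ is eventually quasipolynomial of degree $p$, with period dividing $g_1$ and constant leading coefficient $1/g_1^p$.

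The main work has already been done in Lemma~\ref{l:maxp}; the step here is purely an accounting argument, so I do not anticipate a real obstacle. The only mild subtlety is the need to verify that each Apéry element $a_i$ admits a factorization omitting $g_1$, but this is forced by the definition of $\Ap(S;g_1)$ as indicated above.
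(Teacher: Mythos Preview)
Your proposal is correct and follows essentially the same approach as the paper: both invoke Lemma~\ref{l:maxp} to reduce to factorizations with maximal first coordinate, then decompose $\ell_p^M(n)$ as $q_i(n)^p$ plus a term depending only on the Ap\'ery element $a_i$, yielding a quasipolynomial of the claimed form. The paper's version writes the periodic part as $\ell_p^M(a_i)$ rather than your $C(a_i)$, but since every factorization of $a_i \in \Ap(S;g_1)$ necessarily has $z_1 = 0$, these coincide.
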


\begin{proof}
Let $f(n)$ denote the largest first coordinate of any factorization in $\mathsf Z(n)$, and 
$$g(n) = \ell_p^M \big( n - f(n)g_1 \big).$$
Since $n - f(n)g_1 \in \Ap(S; g_1)$ by definition, $g(n)$ is periodic with period $g_1$, and $f(n)$ is eventually quasilinear with period $g_1$ and leading coefficient $1/g_1$.  Since $\ell_p^M(n)$ is achieved by a factorization with maximal first coordinate for $n \gg 0$ by Lemma~\ref{l:maxp}, 
$$\ell_p^M(n) = f(n)^p + g(n)$$
is quasipolynomial of degree $p$, period $g_1$, and leading coefficient $1/g_1^p$.  
\end{proof}

We now examine $\ell_2^m(n)$.  In what follows, let 
$$N = g_1^2 + \cdots + g_k^2.$$

\begin{prop}\label{p:m2solutions}
Fix $n \ge 0$.  If $\zz \in \ZZ^k$ minimizes $\ell_2(\cdot)$ among all integer solutions to 
\begin{equation}\label{eq:one}
x_1g_1 + \cdots + x_kg_k = n,
\end{equation}
then $\zz + (g_1, \ldots, g_k)$ minimizes $\ell_2(\cdot)$ among all integer solutions to 
\begin{equation}\label{eq:two}
x_1g_1 + \cdots + x_kg_k = n + N.
\end{equation}
\end{prop}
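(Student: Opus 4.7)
The plan is to relate the two minimization problems via translation by the vector $(g_1,\ldots,g_k)$. My first observation would be that $(g_1,\ldots,g_k)$ is itself an integer solution to $x_1 g_1 + \cdots + x_k g_k = N$, since $\sum_i g_i \cdot g_i = N$ by definition. Consequently, the map $\xx \mapsto \xx + (g_1,\ldots,g_k)$ is a bijection between the integer solution sets of~\eqref{eq:one} and~\eqref{eq:two}, with inverse given by subtraction of $(g_1,\ldots,g_k)$.

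Next I would parameterize both solution sets by the integer lattice of null-solutions,
\[
L = \{\ww \in \ZZ^k : w_1 g_1 + \cdots + w_k g_k = 0\};
\]
every integer solution to~\eqref{eq:one} can be written $\zz + \ww$ with $\ww \in L$, and every integer solution to~\eqref{eq:two} correspondingly $\zz + (g_1,\ldots,g_k) + \ww$. The proposition then reduces to verifying
\[
\ell_2\bigl(\zz + (g_1,\ldots,g_k) + \ww\bigr) - \ell_2\bigl(\zz + (g_1,\ldots,g_k)\bigr) = \ell_2(\zz + \ww) - \ell_2(\zz)
\]
for every $\ww \in L$, since by the minimality of $\zz$ the right-hand side is nonnegative for all such $\ww$, which would force the same of the left-hand side and yield the claim.

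The remaining step is a short algebraic expansion. Both differences expand as a common quadratic term $\sum_i w_i^2$ plus a linear cross term, and they differ precisely by $2\sum_i g_i w_i$, which vanishes because $\ww \in L$. I do not anticipate any substantive obstacle: once the translation setup is in place, the content of the proposition is simply that $(g_1,\ldots,g_k)$ lies in the Euclidean orthogonal complement of $L$, so shifting a coset of $L$ by this vector preserves the identity of whichever element of the coset minimizes $\ell_2$.
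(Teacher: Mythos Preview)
Your argument is correct and is essentially the same as the paper's: both reduce to the identity
\[
\sum_i 2(z_i+g_i)w_i + w_i^2 \;=\; \sum_i 2z_iw_i + w_i^2 \;+\; 2\sum_i g_iw_i
\]
together with $\sum_i g_iw_i = 0$ for $\ww \in L$. Your added remark that this amounts to $(g_1,\ldots,g_k)$ being Euclidean-orthogonal to $L$ is a nice conceptual gloss on exactly the computation the paper carries out.
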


\begin{proof}
A solution $\zz \in \ZZ^k$ to~\eqref{eq:one} minimizes $\ell_2(\cdot)$ if and only if
$$\ell_2(\zz') - \ell_2(\zz) = \sum_{i = 1}^k (z_i')^2 - \sum_{i = 1}^k z_i^2 = \sum_{i = 1}^k 2z_id_i + d_i^2 \ge 0$$
for every other solution $\zz' \in \ZZ^k$ to~\eqref{eq:one}, where $\dd = \zz' - \zz$.  In particular, this holds if and only if 
$$\sum_{i = 1}^k 2z_id_i + d_i^2 \ge 0$$
for all $\dd \in \ZZ^k$ satiftying $d_1g_1 + \cdots + d_kg_k = 0$.

Suppose $\zz$ satisfies this property.  For all $\dd$ satisfying $d_1g_1 + \cdots + d_kg_k = 0$,
$$\sum_{i = 1}^k 2(z_i + g_i)d_i + d_i^2 = 2\sum_{i = 1}^k g_id_i + \sum_{i = 1}^k 2z_id_i + d_i^2 \ge 0,$$
which implies that $\zz + (g_1, \ldots, g_k)$ minimizes $\ell_2(\cdot)$ among solutions to~\eqref{eq:two}.
\end{proof}

\begin{thm}\label{t:min2}
The function $\ell_2^m(n)$ is a quasipolynomial of degree 2 on $n \gg 0$, with period $N$ and constant leading coefficient $1/N$.  
\end{thm}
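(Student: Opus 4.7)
The plan is to use Proposition~\ref{p:m2solutions} to reduce the problem from minimizing $\ell_2$ over factorizations (i.e., nonnegative integer solutions) to minimizing $\ell_2$ over \emph{all} integer solutions, where the proposition immediately furnishes a clean recursion with period $N$. Concretely, for each residue class $r \in \{0, 1, \ldots, N-1\}$, I would fix an integer $m_r \equiv r \pmod{N}$ and let $\zz^{(r)} \in \ZZ^k$ be a minimizer of $\ell_2$ subject to $x_1 g_1 + \cdots + x_k g_k = m_r$; such a minimizer exists because $\ell_2$ is a coercive strictly convex quadratic form when restricted to the affine hyperplane $\{\xx : \sum x_i g_i = m_r\} \subset \RR^k$, which meets $\ZZ^k$ nontrivially since $\gcd(g_1, \ldots, g_k) = 1$. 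Iterating Proposition~\ref{p:m2solutions} then yields that, for every $n \equiv r \pmod{N}$ with $n \ge m_r$, the vector $\zz^{(r)} + q(g_1, \ldots, g_k)$, with $q = (n - m_r)/N$, is an integer minimizer of $\ell_2$ subject to $x_1 g_1 + \cdots + x_k g_k = n$.

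The next step is to argue that for $n \gg 0$ this integer minimizer actually lies in $\ZZ_{\ge 0}^k$, so that it is a genuine factorization and hence realizes $\ell_2^m(n)$. Since there are only finitely many residues, the coordinates of the representatives $\zz^{(r)}$ are uniformly bounded below, while each $q g_i \to \infty$ as $n \to \infty$; thus every entry of $\zz^{(r)} + q(g_1, \ldots, g_k)$ is eventually nonnegative. As any factorization is in particular an integer solution, this forces
$$\ell_2^m(n) = \sum_{i=1}^k \bigl(z_i^{(r)} + q g_i\bigr)^2$$
for all $n \gg 0$, where $r = n \bmod N$ and $q = (n - m_r)/N$.

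Expanding this expression and using the identities $\sum_i z_i^{(r)} g_i = m_r$ and $\sum_i g_i^2 = N$, a direct computation gives
$$\ell_2^m(n) = q^2 N + 2 q m_r + \sum_{i=1}^k \bigl(z_i^{(r)}\bigr)^2 = \frac{n^2}{N} + \Bigl(\sum_{i=1}^k \bigl(z_i^{(r)}\bigr)^2 - \frac{m_r^2}{N}\Bigr),$$
and a short verification (reapplying Proposition~\ref{p:m2solutions} to compare two choices of $m_r$ differing by $N$) shows the parenthesized quantity depends only on $r$, not on the representative $m_r$. This exhibits $\ell_2^m(n)$ as a quasipolynomial of degree $2$, period dividing $N$, with constant leading coefficient $1/N$. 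The main obstacle is the bridging step from integer minimization to factorization minimization, i.e., confirming that the integer minimizers produced by iterating the proposition truly become nonnegative for $n$ large; however, since this reduces to a uniform bound on the coordinates of the finitely many representatives $\zz^{(r)}$, it presents no serious difficulty.
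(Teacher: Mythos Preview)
Your proof is correct and follows essentially the same approach as the paper: both use Proposition~\ref{p:m2solutions} to show that for $n \gg 0$ the integer minimizer of $\ell_2$ is actually a factorization, and then extract the quasipolynomial structure. The only cosmetic difference is that the paper computes the first and second differences $\ell_2^m(n+N) - \ell_2^m(n) = 2n + N$ and $2N$ to conclude, whereas you expand the closed form $\ell_2^m(n) = n^2/N + C(r)$ directly; these are equivalent computations.
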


\begin{proof}
For $n \in \ZZ_{\ge 0}$, Proposition~\ref{p:m2solutions} implies the smallest coordinate of the integer solution to~\eqref{eq:two} minimizing $\ell_2(\cdot)$ is strictly larger than the smallest coordinate of the integer solution to~\eqref{eq:two} minimizing $\ell_2(\cdot)$.  As such, one may choose $n \in S$ large enough to ensure some $\zz \in \mathsf Z(n)$ minimizes $\ell_2(\cdot)$ over all integer solutions.  We then have
$$\ell_2^m(n + N) - \ell_2^m(n) = \sum_{i = 1}^k 2z_ig_i + g_i^2 = 2n + N,$$
and taking second differences yields
$$(\ell_2^m(n + 2N) - \ell_2^m(n + N)) - (\ell_2^m(n + N) - \ell_2^m(n)) = (2(n + N) + N) - (2n + N) = 2N,$$
which is independent of $n$.  The result follows.
\end{proof}

\begin{example}\label{e:min3}
It turns out $\ell_p^m(n)$ is not necessarily an eventual quasipolynomial for $3 \le p < \infty$.  For example, if $S = \<2,3\>$, then by a similar computation to the one used in the proof of Proposition~\ref{p:m2solutions}, $\mathsf m_3(n)$ is achieved by the solution $(c_1,c_2) \in \mathsf Z(n)$ with
$$c_1 = \left\lfloor\frac{-8 \pm n\sqrt{130}}{19}\right\rfloor.$$
This produces a formula for $\ell_3^m(n)$ involving the floor of an integer multiple of an irrational number, which cannot be eventually quasipolynomial.  
\end{example}

\section{Arithmetical congruence monoids}
\label{sec:acms}

In this section, we turn our attention to arithmetical congruence monoids.  
For a fixed ACM $M_{a,b}$, write
$$\mathsf Z(x) = \{\zz \in \ZZ_{\ge 0}^k : x = u_1^{z_1} \cdots u_k^{z_k} \text{ for some $k \ge 0$ and distinct atoms } u_1, \ldots, u_k \in M_{a,b}\}$$
for the set of \emph{factorization tuples} of $x$, 
for each $p \in \ZZ_{\ge 0} \cup \{\infty\}$ define
$$
\ell_p^M(x) = \max\{\ell_p(\zz) : \zz \in \mathsf Z(x)\}
\quad \text{and} \quad
\ell_p^m(x) = \min\{\ell_p(\zz) : \zz \in \mathsf Z(x)\}
$$
for each $x \in M_{a,b}$.  

Given the aperiodic nature of primes in $\ZZ$, one would expect that the above functions will not be quasipolynomial like their analogues for numerical semigroups.  As such, our results in this section seek only to determine asymptotic growth rate.  For each $p \in \{0,1,\infty\}$, the asymptotics of one of the two functions is straightforward to identify.  

\begin{thm}\label{t:acmeasy}
For each fixed $x \in M_{a,b}$ with $x > 1$, we have 
$$
\ell_1^M(x^n) \in \Theta(n),
\qquad \ell_\infty^M(x^n) \in \Theta(n), 
\qquad \text{and} \qquad
\ell_0^m(x^n) \in \Theta(1).
$$
\end{thm}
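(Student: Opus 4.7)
The plan is to prove each of the three asymptotic statements by matching a straightforward lower bound (obtained by ``copying'' a fixed factorization of $x$) with a straightforward upper bound. Since $x$ is fixed, any particular factorization of $x$ in $M_{a,b}$ may be raised to the $n$-th power to yield a factorization of $x^n$, and this provides the common source of all three lower bounds.

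For $\ell_1^M(x^n) \in \Theta(n)$, the lower bound proceeds by fixing any factorization $x = u_1^{a_1} \cdots u_k^{a_k}$ into atoms, which gives a factorization $x^n = u_1^{na_1} \cdots u_k^{na_k}$ whose length is $n \cdot (a_1 + \cdots + a_k) \ge n$. For the upper bound, I would use the observation that every atom $u \in M_{a,b}$ satisfies $u \ge 2$, so that for any factorization tuple $\zz \in \mathsf Z(x^n)$ realized by atoms $u_1,\dots,u_k$, taking logarithms in $\ZZ$ yields
$$n \log x \;=\; \sum_{i=1}^k z_i \log u_i \;\ge\; (\log 2)\, \ell_1(\zz),$$
so $\ell_1(\zz) \le (\log x / \log 2)\, n$, giving an $O(n)$ bound.

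For $\ell_\infty^M(x^n) \in \Theta(n)$, the upper bound is immediate from $\ell_\infty(\zz) \le \ell_1(\zz)$ and the previous argument. For the lower bound, I would again fix a factorization $x = u_1^{a_1}\cdots u_k^{a_k}$ with some $a_j \ge 1$ and note that $x^n = u_1^{na_1}\cdots u_k^{na_k}$ realizes $\ell_\infty \ge n a_j \ge n$.

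Finally, for $\ell_0^m(x^n) \in \Theta(1)$, the upper bound is obtained by observing that the raised factorization $x^n = u_1^{na_1} \cdots u_k^{na_k}$ uses exactly the same $k$ atoms as the fixed factorization of $x$, so $\ell_0^m(x^n) \le k = \ell_0^m(x)$, a constant independent of $n$. The lower bound $\ell_0^m(x^n) \ge 1$ holds because $x^n > 1$, so any factorization tuple must have at least one nonzero coordinate. I do not foresee any serious obstacle here; the only subtle point is verifying that the ``$n$-th power'' construction yields a legitimate factorization tuple (i.e.\ still a product of \emph{distinct} atoms), which holds automatically because the atoms $u_1,\dots,u_k$ in the chosen factorization of $x$ are already distinct.
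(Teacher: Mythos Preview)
Your proof is correct and follows essentially the same approach as the paper: raise a fixed factorization of $x$ to the $n$-th power for the lower bounds and for the $\ell_0^m$ upper bound, and use that every atom exceeds $1$ for the $\ell_1^M$ upper bound. The only cosmetic difference is that the paper bounds $\ell_1^M(x^n)$ by $k'n$, where $k'$ is the number of prime factors of $x$ in $\ZZ$ counted with multiplicity, rather than by $(\log_2 x)\,n$ via logarithms; both arguments exploit the same fact that atoms are at least $2$.
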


\begin{proof}
Fix a factorization $x = u_1^{z_1} \cdots u_k^{z_k}$, where $u_1, \ldots, u_k \in M_{a,b}$ are distinct irreducible elements, and let $k'$ denote the number of primes in the usual factorization of $x \in \ZZ$.  Since
$$x^n = u_1^{nz_1} \cdots u_k^{nz_k}$$
for each $n \ge 1$, we see 
$$
n \le \ell_\infty^M(x^n) \le \ell_1^M(x^n) \le k'n
\qquad \text{and} \qquad
\ell_0^m(x^n) \le k.
$$
All 3 claims follow from the above bounds.  
\end{proof}

If $a = 1$, the ACM $M_{a,b}$ is said to be \emph{regular}; in this case, it is know that for fixed $x \in M_{a,b}$ with $x > 1$, the set of irreducible elements dividing any power $x^n$ is finite (this follows from the fact that $M_{a,b}$ is Krull with finite class group; we refer the reader to~\cite[Section~3]{acmsurvey} for details).  We record the following consequences of this fact.  

\begin{thm}
If $a = 1$, then for each fixed $x \in M_{a,b}$ with $x > 1$, we have
$$
\ell_1^m(x^n) \in \Theta(n),
\qquad \ell_\infty^m(x^n) \in \Theta(n), 
\qquad \text{and} \qquad
\ell_0^M(x^n) \in \Theta(1).
$$
\end{thm}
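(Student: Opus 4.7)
The plan is to leverage the regularity hypothesis $a=1$ exactly where the preamble flags it: when $a=1$ the set of atoms of $M_{a,b}$ dividing any power of a fixed $x>1$ is a finite set $\{u_1,\ldots,u_N\}$. Let $C = \max_i u_i$, noting $C > 1$ since every atom exceeds $1$. Every factorization tuple $\ww \in \mathsf Z(x^n)$ is then supported on these $N$ atoms, which gives enough rigidity to bound all three quantities.

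For the three claims, I would handle the upper/lower bounds separately. Fix once and for all a factorization $x = u_1^{z_1} \cdots u_k^{z_k}$ (after reindexing so it uses the first $k$ of the $u_i$'s); then $x^n = u_1^{nz_1} \cdots u_k^{nz_k}$ is a factorization of $x^n$ witnessing
$$
\ell_1^m(x^n) \le n(z_1+\cdots+z_k),
\qquad
\ell_\infty^m(x^n) \le n \max_i z_i,
\qquad
\ell_0^M(x^n) \ge |\{i : z_i > 0\}| \ge 1.
$$
This dispatches one direction of each $\Theta$ statement. For $\ell_0^M$, the opposite direction is immediate: every factorization of $x^n$ uses atoms from the finite pool $\{u_1,\ldots,u_N\}$, so $\ell_0^M(x^n) \le N$, giving $\ell_0^M(x^n) \in \Theta(1)$.

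For the lower bounds on $\ell_1^m(x^n)$ and $\ell_\infty^m(x^n)$, I would take logarithms of any factorization $x^n = u_1^{w_1} \cdots u_N^{w_N}$ to get the identity
$$
n \log x \;=\; \sum_{i=1}^N w_i \log u_i \;\le\; (\log C)\sum_{i=1}^N w_i \;=\; (\log C)\,\ell_1(\ww),
$$
and similarly $n\log x \le N(\log C)\,\ell_\infty(\ww)$. Since $\log x > 0$ and $\log C > 0$ are constants depending only on $x$, these give $\ell_1^m(x^n) \ge \tfrac{\log x}{\log C}\,n$ and $\ell_\infty^m(x^n) \ge \tfrac{\log x}{N\log C}\,n$, completing both $\Theta(n)$ statements.

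The only conceptual obstacle is the invocation of the finite-atom property, but this is precisely the consequence of $M_{a,b}$ being Krull with finite class group that the excerpt explicitly permits us to cite from~\cite[Section~3]{acmsurvey}. Everything else is bookkeeping with the multiplicative-to-additive passage via $\log$, so no step should be technically delicate.
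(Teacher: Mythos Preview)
Your proposal is correct. The paper does not supply a proof of this theorem at all; it simply states the result as a consequence of the finite-atom property for regular ACMs and moves on. Your argument fills in exactly the details one would expect: the upper bounds on $\ell_1^m$ and $\ell_\infty^m$ (and the lower bound on $\ell_0^M$) come from powering up a fixed factorization of $x$, just as in the proof of Theorem~\ref{t:acmeasy}, while the bound $\ell_0^M(x^n)\le N$ is immediate from the finite atom pool. Your logarithm trick for the linear lower bounds on $\ell_1^m$ and $\ell_\infty^m$ is a clean way to extract those inequalities from the finiteness of $\{u_1,\ldots,u_N\}$, and it is precisely the sort of routine verification the paper is implicitly leaving to the reader.
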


In contrast to Theorem~\ref{t:acmeasy} and the results of Section~\ref{sec:ns}, for \emph{singular} (i.e., non-regular) ACMs the asymptotics of $\ell_1^m(x^n)$ and $\ell_\infty^m(x^n)$, viewed as functions of $n$, need not grow linearly in $n$, and $\ell_0^M(x^n)$ need not be bounded.  

\begin{example}\label{e:bifurcus}
The singular ACM $M_{6,6}$ is bifurcus, meaning every reducible element can be written as a product of just two atoms~\cite{bifurcusacm}, so 
$$\ell_\infty^m(x) \le \ell_1^m(x) \le 2$$
for all $x \in M_{6,6}$.  This implies $\ell_\infty^m(x^n) \in \Theta(1)$ and $\ell_1^m(x^n) \in \Theta(1)$ as functions of $n$.  
%
%
%
\end{example}

For the remainder of this section, we turn our attention to the singular ACM $M_{4,6}$ and elements of the form $x = 2^a 5^b 7^c$ for $a, b, c \in \ZZ_{\ge 0}$.  After recalling a known characterization of the relevant atoms of $M_{4,6}$, we demonstrate that the asymptotic growth rate of $\ell_\infty^m(x^n)$ and $\ell_1^m(x^n)$ matches that of regular ACMs, while the asymptotic behavior of $\ell_0^M(x^n)$ depends on $x$ even in this restrictive setting.  

\begin{lemma}\label{l:meyersonatoms}
An integer $u = 2^a 5^b 7^c$ lies in $M_{4,6}$ if and only if $a \ge 1$ and $a + b$ is even.  Moreover, $u$ is irreducible if and only if one of the following holds:
\begin{enumerate}[(i)]
\item 
$a = 2$ and $b = 0$; or

\item 
$a =1$ and $b$ is odd.  

\end{enumerate}


\end{lemma}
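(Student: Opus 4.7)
The plan is to prove membership and irreducibility separately, handling the former by a congruence analysis mod $6$ and the latter by examining the exponent of $2$ in any putative factorization.

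For membership, I would observe that $M_{4,6}\setminus\{1\}$ consists of integers congruent to $4\pmod 6$, equivalently integers that are even and congruent to $1\pmod 3$. Since $5$ and $7$ are coprime to $2$, evenness of $u=2^a5^b7^c$ forces $a\ge 1$. Working mod $3$, we have $2\equiv -1$, $5\equiv -1$, $7\equiv 1$, so $u\equiv (-1)^{a+b}\pmod 3$, which equals $1$ if and only if $a+b$ is even. This establishes the membership criterion.

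For irreducibility, I would first verify that the elements in (i) and (ii) are atoms. Any factorization $u=vw$ with $v=2^{a_1}5^{b_1}7^{c_1}$, $w=2^{a_2}5^{b_2}7^{c_2}$ in $M_{4,6}\setminus\{1\}$ requires $a_1,a_2\ge 1$ and each $a_i+b_i$ even. In case (ii) with $a=1$, this is impossible since $a_1+a_2=1$ forces one of $a_1,a_2$ to vanish. In case (i) with $a=2, b=0$, one is forced to $a_1=a_2=1$ and $b_1=b_2=0$, but then $a_i+b_i=1$ is odd, contradiction.

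Conversely, suppose $u=2^a5^b7^c\in M_{4,6}$ is irreducible with $a\ge 1$ and $a+b$ even. If $a\ge 3$, write $u=4\cdot(2^{a-2}5^b7^c)$; both factors lie in $M_{4,6}\setminus\{1\}$ since $4\in M_{4,6}$ and the second factor satisfies $a-2\ge 1$ and $(a-2)+b$ even, contradicting irreducibility. Hence $a\in\{1,2\}$. If $a=2$, then $b$ is even; if additionally $b\ge 2$, then one can split $b=1+(b-1)$ with both parts odd (since $b$ is even and $\ge 2$) and write $u=(2\cdot 5\cdot 7^{c})\cdot (2\cdot 5^{b-1})$, both atoms of form (ii) lying in $M_{4,6}$, contradicting irreducibility; thus $b=0$, giving case (i). If $a=1$, then $b$ is odd by the parity condition, giving case (ii).

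The main thing to be careful about is the converse direction of the irreducibility claim, specifically the splitting argument when $a=2$ and $b$ is even and positive — I need to confirm that both resulting factors genuinely satisfy the membership criterion of the lemma, which boils down to checking that $1+1$ and $1+(b-1)$ are both even. The rest of the argument is a straightforward comparison of $2$-adic valuations, since the constraint $a_i\ge 1$ on each factor severely restricts how the exponent of $2$ can be distributed.
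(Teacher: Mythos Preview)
Your proof is correct. The membership criterion via the Chinese Remainder Theorem (even and $\equiv 1\pmod 3$) and the subsequent $2$-adic bookkeeping all check out; in particular, your splitting in the case $a=2$, $b\ge 2$ even is valid since both $2\cdot 5\cdot 7^c$ and $2\cdot 5^{b-1}$ satisfy the membership criterion you just established.

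The paper, by contrast, does not argue directly at all: it simply invokes the full classification of atoms of $M_{4,6}$ from \cite{acmarithmetic} and \cite[Example~4.10]{acmsurvey} and reads off the special case $u=2^a5^b7^c$. Your route is more self-contained and elementary, requiring no outside input beyond the definition of $M_{4,6}$; the paper's route is shorter on the page but relies on a structural result proved elsewhere. Either is perfectly adequate for the lemma's role in the later propositions.
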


\begin{proof}
This follows from~\cite{acmarithmetic} and \cite[Example~4.10]{acmsurvey}, which each give a full characterization of the atoms of $M_{4,6}$.  
\end{proof}

\begin{thm}\label{t:inftyacm}
For fixed $x = 2^a 5^b 7^c \in M_{4,6}$ with $a \ge 1$, we have 
$$
\ell_\infty^m(x^n) \in \Theta(n)
\qquad \text{and} \qquad
\ell_1^m(x^n) \in \Theta(n).
$$
\end{thm}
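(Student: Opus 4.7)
My plan separates the two claims. The upper bounds $\ell_1^m(x^n), \ell_\infty^m(x^n) \in O(n)$ are immediate from raising any fixed atomic factorization of $x$ to the $n$th power. The lower bound $\ell_1^m(x^n) \geq na/2$ is just as easy: by Lemma~\ref{l:meyersonatoms} every atom of $M_{4,6}$ of the form $2^{a'}5^{b'}7^{c'}$ has $a' \in \{1,2\}$, so the total multiplicity $\ell_1$ in any factorization of $x^n$ satisfies $2\ell_1 \geq na$. The real content is therefore the linear lower bound on $\ell_\infty^m$.

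For this, I would fix a factorization $z = (z_a)$ with $\max_a z_a = M := \ell_\infty^m(x^n)$ and decompose it into $M$ \emph{squarefree layers} $z^{(1)},\ldots,z^{(M)}$, where $z_a^{(k)}$ is $1$ if $z_a \geq k$ and $0$ otherwise. Each layer selects a set $S^{(k)}$ of distinct atoms, and the exponent-vector sums $v^{(k)} = (A^{(k)}, B^{(k)}, C^{(k)}) := \sum_{a \in S^{(k)}} v_a$ decompose the target: $\sum_k v^{(k)} = (na, nb, nc)$. Writing $N_1^{(k)}$ for the number of type (i) atoms in $S^{(k)}$ and $n_\beta^{(k)}$ for the number of type (ii) atoms of $5$-exponent $\beta$ in $S^{(k)}$, one has $A^{(k)} = 2N_1^{(k)} + \sum_\beta n_\beta^{(k)}$ and $B^{(k)} = \sum_\beta \beta n_\beta^{(k)}$. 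Because the $\gamma$-coordinates must be distinct within each collection of atoms sharing a fixed $\beta$ (and within the type (i) atoms),
\[
C^{(k)} \;\geq\; \binom{N_1^{(k)}}{2} + \sum_\beta \binom{n_\beta^{(k)}}{2},
\]
which immediately forces $N_1^{(k)}, n_\beta^{(k)} \leq \sqrt{2 C^{(k)}} + 1$ for every $\beta$.

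The main obstacle is that $\sum_\beta n_\beta^{(k)}$ is \emph{not} directly $O(\sqrt{C^{(k)}})$ when many $\beta$'s contribute, and when $a \leq b$ we cannot simply bound $A^{(k)}$ by $B^{(k)} + O(\sqrt{C^{(k)}})$. I would handle this by fixing an odd integer $r$ with $r+2 > b/a$ and truncating,
\[
\sum_\beta n_\beta^{(k)}
\;\leq\; \tfrac{r+1}{2}\bigl(\sqrt{2 C^{(k)}} + 1\bigr) + \tfrac{1}{r+2}\!\!\sum_{\beta \geq r+2}\!\! \beta \, n_\beta^{(k)}
\;\leq\; \tfrac{r+1}{2}\bigl(\sqrt{2 C^{(k)}} + 1\bigr) + \tfrac{B^{(k)}}{r+2},
\]
yielding $A^{(k)} \leq C_r \sqrt{C^{(k)}} + B^{(k)}/(r+2) + C_r'$ for constants $C_r, C_r'$ depending only on $r$. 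Summing over $k$ and applying Cauchy--Schwarz in the form $\sum_k \sqrt{C^{(k)}} \leq \sqrt{M\,nc}$ gives
\[
na \;\leq\; C_r \sqrt{Mnc} + \tfrac{nb}{r+2} + C_r'\, M,
\]
and since $a - b/(r+2) > 0$ by the choice of $r$, solving this inequality for $M$ forces $M \geq C'' n$ for some constant $C'' = C''(a,b,c) > 0$. The delicate point is precisely this choice of $r$: no uniform single-prime inequality suffices when $a \leq b$, so the argument must trade off ``few atoms with large $\beta$'' against ``many atoms with small $\beta$'' via the cutoff, after which Cauchy--Schwarz closes the loop.
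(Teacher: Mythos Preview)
Your argument is correct, but it takes a noticeably different route from the paper's. The paper never decomposes a factorization into squarefree layers or invokes Cauchy--Schwarz. Instead, it declares an atom $2^p5^q7^r$ \emph{good} when $a(q+r)\le 3p(b+c)$ and \emph{evil} otherwise; since $p\le 2$, only finitely many atoms are good. A short exponent count shows that in any factorization of $x^n$ the number of evil atoms (with multiplicity) is less than twice the number of good atoms, so a positive fraction of the total $\ell_1$-length---which you already observed is $\ge na/2$---is carried by the $G$ good atoms, and the pigeonhole principle forces some good atom to appear $\ge cn$ times. This is shorter and entirely elementary: one inequality and no analytic tools.

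Your layer decomposition, by contrast, converts the $\ell_\infty$ problem into a sum of $M$ ``distinct-atom'' problems, extracts the combinatorial constraint $C^{(k)}\ge\binom{N_1^{(k)}}{2}+\sum_\beta\binom{n_\beta^{(k)}}{2}$ from distinctness, and then balances the small-$\beta$ and large-$\beta$ contributions via the cutoff $r$ and Cauchy--Schwarz. The extra machinery is not needed here, but the method is more robust: it would adapt mechanically to elements divisible by more primes congruent to $1\bmod 6$ (one truncation parameter per such prime), whereas the paper's good/evil threshold would need to be redesigned. So the paper's proof wins on economy for this specific statement, while yours is a template that scales.
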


\begin{proof}
In what follows, we say an atom $u = 2^p 5^q 7^r \in M_{4,6}$ is \emph{good} if $a(q+r) \leq 3p(b+c)$, and \emph{evil} otherwise.  Note that $M_{4,6}$ has only finitely many good atoms since $p \le 2$ and $b$ and $c$ are fixed.  Fix a factorization
$$x^n = u_1 \cdots u_k u_1' \cdots u_m'$$
of $x^n$ into (not necessarily distinct) good atoms $u_1, \ldots, u_k$ and evil atoms $u_1', \ldots, u_m'$.  We~claim $2k \ge m$.  Indeed, write each 
$$u_i = 2^{p_i} 5^{q_i} 7^{r_i}
\qquad \text{and} \qquad
u_j' = 2^{p_j'} 5^{q_j'} 7^{r_j'},$$
let $P = p_1 + \cdots + p_k$ and $P' = p_1' + \cdots + p_m$, and define $Q, Q', R$, and $R'$ analogously.  By the above factorization for $x^n$, 
$$
a(Q' + R')
\le a(Q + R + Q' + R')
= \tfrac{1}{n}a(b + c)
= (P + P')(b + c),
$$
and since each $u_j'$ is evil and each $p_i, p_j' \in \{1,2\}$, 
$$
3m(b + c)
\le 3P'(b + c)
< a(Q' + R')
\le (P + P')(b + c)
\le 2(k + m)(b + c),
$$
from which the inequality $m \le 2k$ follows.  

Having shown this, letting $G$ denote the number of good atoms in $M_{4,6}$, the pigeonhole principle ensures that any factorization of $x^n$ must have at least $\tfrac{1}{3G}n$ copies of some good atom, so 
$$\tfrac{1}{3G}n \le \ell_\infty^m(x^n) \le \ell_1^m(x^n) \le \ell_1^M(x^n)$$
ensures $\ell_\infty^m(x^n) \in \Theta(n)$ and $\ell_1^m(x^n) \in \Theta(n)$ by Theorem~\ref{t:acmeasy}.  
\end{proof}

\begin{prop}\label{p:acm0norm1/2}
If $x = 28 = 2^2 \cdot 7$ or $x = 40 = 2^3 \cdot 5$ in $M_{4,6}$, then $\ell_0^M(x^n) \in \Theta(n^{1/2})$.  
\end{prop}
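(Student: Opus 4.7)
The plan is to handle both values of $x$ in parallel, each by proving a matching upper bound and lower bound of order $\sqrt{n}$ on the maximum number of distinct atoms appearing in a factorization of $x^n$ (this being what $\ell_0^M$ measures, since $z_i\ge 1$ makes $z_i^0=1$).

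First I would apply Lemma~\ref{l:meyersonatoms} to identify, for each $x$, the only atoms that can possibly appear in a factorization of $x^n$. For $x=28=2^2\cdot 7$, the exponent of $5$ in $x^n$ is $0$, so all usable atoms must have $q=0$, which forces type~(i) and leaves only atoms of the form $2^2\cdot 7^r$ with $r\ge 0$. For $x=40=2^3\cdot 5$, the exponent of $7$ is $0$, so usable atoms are $2^2$ (from type~(i)) and $2\cdot 5^q$ with $q$ odd (from type~(ii)).

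For the upper bound, I would extract a combinatorial constraint from the exponent equations. For $x=28$: a factorization using $k$ distinct atoms $2^2\cdot 7^{r_i}$ with multiplicities $z_i\ge 1$ must satisfy $\sum z_i=n$ (from the $2$-exponent) and $\sum z_i r_i=n$ (from the $7$-exponent); since the $r_i$ are distinct nonnegative integers, $\sum z_i r_i\ge\sum r_i\ge\binom{k}{2}$, forcing $k=O(\sqrt{n})$. For $x=40$: a factorization using $k$ distinct type-(ii) atoms $2\cdot 5^{q_i}$ (with distinct odd $q_i$) and possibly the atom $2^2$ must have $\sum z_i q_i=n$; since the minimum of $\sum q_i$ over $k$ distinct positive odd integers is $k^2$, this gives $n\ge k^2$, so $k\le\sqrt{n}+1=O(\sqrt{n})$.

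For the lower bound, I would give explicit constructions. For $x=28$, with $k=\lfloor(1+\sqrt{1+8n})/2\rfloor$ I would use the $k$ atoms $2^2\cdot 7^r$ for $r=0,1,\ldots,k-1$, set $z_r=1$ for $r\ge 2$, and solve for $z_0,z_1\ge 1$ from the two linear equations; a direct computation gives $z_1=n+1-\binom{k}{2}\ge 1$ and $z_0=\binom{k-1}{2}\ge 1$, yielding a valid factorization with $k$ distinct atoms. For $x=40$, with $k=\lfloor\sqrt{n}\rfloor$ I would use the $k$ atoms $2\cdot 5^{2i-1}$ for $i=1,\ldots,k$, each with multiplicity $1$ except the first (with multiplicity $n-k^2+1\ge 1$), and then use the remaining $2$'s as copies of the atom $2^2$, giving $k+1$ distinct atoms. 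Both constructions give $\ell_0^M(x^n)\ge\Omega(\sqrt{n})$.

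The only delicate point is verifying, in each lower-bound construction, that the multiplicities solving the two exponent equations are simultaneously positive integers for all sufficiently large $n$; this is routine arithmetic but is the main thing to check carefully, especially making sure that the parity constraint ($a+b$ even in the lemma) is automatically satisfied, which it is because $2^2$ and $2\cdot 5^{\mathrm{odd}}$ both individually lie in $M_{4,6}$.
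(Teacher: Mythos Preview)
Your proposal is correct and follows essentially the same approach as the paper: identify the relevant atoms via Lemma~\ref{l:meyersonatoms}, obtain the upper bound by observing that $k$ distinct atoms force the sum of their distinct $5$- or $7$-exponents (hence $n$) to be at least $\binom{k}{2}$ or $k^2$, and obtain the lower bound by an explicit factorization. The paper goes slightly further by pinning down the exact value $\ell_0^M(x^n)=k+1$ on the intervals $[T_k,T_{k+1})$ and $[k^2,(k+1)^2)$, and its explicit factorizations differ cosmetically from yours (for $x=28$ it absorbs the slack into an extra atom $2^2\cdot 7^{n-T_k}$ rather than adjusting the multiplicities of $2^2$ and $2^2\cdot 7$; for $x=40$ it splits into two cases by the parity of $n-k$), but the underlying argument is the same.
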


\begin{proof}
First, consider $x = 28$.  Let $T_k = \binom{k+1}{2}$ denote the $k$-th triangular number.  We~claim if $n \in [T_k, T_{k+1})$ for $k \ge 2$, then $\ell_0^M(x^n) = k + 1$.  Indeed,
$$x^n = (2^2 7)^n = (2^2)^{n-k-1} (2^2 7) (2^2 7^2) \cdots (2^2 7^k) (2^2 7^{n-T_k}).$$
is a factorization by Lemma~\ref{l:meyersonatoms}, and the first $k+1$ atoms are distinct, so we obtain a lower bound $\ell_0^M(x^n) \ge k + 1$.  Similarly, by Lemma~\ref{l:meyersonatoms}, any atom dividing $x^n$ must be of the form $2^2 7^i$ for some $i \ge 0$.  If a factorization of $x^n$ had at least $k+2$ distinct atoms, then each such atom must have a distinct number of 7's in its prime factorization, and the resulting expression for $x^n$ would contain at least
$$0 + 1 + 2 + \cdots + k + (k+1) = T_{k+1}$$
copies of 7, which is impossible if $n < T_{k+1}$.  This ensures $\ell_0^M(x^n) = k + 1$.  

Next, consider $x = 40$.  We claim if $n \in [k^2, (k+1)^2)$, then $\ell_0^M(x^n) = k + 1$.  Indeed, by Lemma~\ref{l:meyersonatoms}, any atom dividing $x^n$ must either be 4 or of the form $2 \cdot 5^i$ for some odd $i \ge 1$.  If a factorization of $x^n$ had at least $k+2$ distinct atoms, then each such atom must have a distinct number of 5's in its prime factorization, and the resulting expression for $x^n$ would contain at least
$$0 + 1 + 3 + \cdots + (2k - 1) + (2k + 1) = (k+1)^2$$
copies of 5, which is impossible if $n < (k + 1)^2$.  This ensures $\ell_0^M(x^n) \le k + 1$.  To ensure equality, we verify that if $n \not\equiv k \bmod 2$, then 
$$x^n = (2^3 5)^n = (2^2)^{(n - k - 1)/2} (2 \cdot 5) (2 \cdot 5^3) (2 \cdot 5^5) \cdots (2 \cdot 5^{2k-1})(2 \cdot 5^{n-k^2})$$
is a factorization for $x^n$, while if $n \equiv k \bmod 2$, then 
$$x^n = (2^3 5)^n = (2^2)^{(n-k-2)/2} (2 \cdot 5)^2 (2 \cdot 5^3) (2 \cdot 5^5) \cdots (2 \cdot 5^{2k-1})(2 \cdot 5^{n-k^2-1})$$
is a factorization.  Each expression contains at least $k+1$ atoms, so $\ell_0^M(x^n) = k + 1$.  
\end{proof}

\begin{prop}\label{p:acm0norm2/3}
If $x = 70 = 2 \cdot 5 \cdot 7 \in M_{4,6}$, then $\ell_0^M(x^n) \in \Theta(n^{2/3})$. 
\end{prop}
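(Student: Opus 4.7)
The strategy is to establish matching upper and lower bounds of order $n^{2/3}$.  By Lemma~\ref{l:meyersonatoms}, every atom of $M_{4,6}$ dividing $x^n = 2^n 5^n 7^n$ is either a \emph{type A} atom $2^2 \cdot 7^r$ with $r \ge 0$, or a \emph{type B} atom $2 \cdot 5^q \cdot 7^r$ with $q \ge 1$ odd and $r \ge 0$.  In any factorization of $x^n$ with $\alpha$ distinct type A atoms and $\beta$ distinct type B atoms, I would bound $\alpha$ and $\beta$ separately.

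For the upper bound, the $\alpha$ distinct type A atoms have distinct non-negative 7-exponents and multiplicities at least $1$, so together they contribute at least $\binom{\alpha}{2}$ to the total 7-count of $n$, forcing $\alpha = O(\sqrt n)$.  For the type B atoms with distinct pairs $(q_j, r_j)$, comparing the 5- and 7-exponents in $x^n$ gives $\sum_j (q_j + r_j) \le 2n$.  Since the number of admissible pairs $(q, r)$ with $q + r \le s$ is at most $(s+1)^2/4$, a pigeonhole estimate forces at least half of the $\beta$ pairs to satisfy $q + r \ge \sqrt{\beta}$, so $\sum_j (q_j + r_j) = \Omega(\beta^{3/2})$ and hence $\beta = O(n^{2/3})$.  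Combined, $\ell_0^M(x^n) = O(n^{2/3})$.

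For the lower bound, I would explicitly construct a factorization of $x^n$ with $\Omega(n^{2/3})$ distinct atoms.  Fix $M = \lfloor (n/3)^{1/3} \rfloor$ and let
\[
T = \{2 \cdot 5^q \cdot 7^r : q \in \{1, 3, \ldots, 2M - 1\},\ r \in \{0, 1, \ldots, 2M\}\},
\]
so that $|T| = M(2M+1)$.  A direct computation shows that taking each element of $T$ with multiplicity $1$ contributes $M(2M+1)$ to the 2-exponent and $M^2(2M+1)$ to both the 5- and 7-exponents of the resulting partial product; the critical feature is the equality of the 5- and 7-contributions.  Padding with $b = n - M^2(2M+1)$ additional copies of the atom $2 \cdot 5 \cdot 7 \in T$ simultaneously corrects both deficits, and $\mu = M(2M+1)(M-1)/2$ copies of the type A atom $4$ correct the remaining 2-shortfall, producing a valid factorization of $x^n$ with $M(2M+1) + 1 = \Omega(n^{2/3})$ distinct atoms.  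The main obstacle is engineering $T$ so that its 5- and 7-contributions are equal, which makes the padding problem tractable; the remaining arithmetic checks — that $b \ge 0$ (from $M^3 \le n/3$), that $\mu$ is a non-negative integer (using that $M(M-1)$ is even), and that the exponents on the primes $2$, $5$, $7$ match $(n, n, n)$ exactly — then follow routinely.
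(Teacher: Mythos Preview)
Your proof is correct.  The upper bound matches the paper's argument essentially verbatim: both split the atoms dividing $x^n$ into the two types permitted by Lemma~\ref{l:meyersonatoms}, bound the number of type~A atoms by $O(\sqrt n)$ via the $7$-exponent, and bound the number of type~B atoms by $O(n^{2/3})$ by observing that the pairs $(q,r)$ are distinct and $\sum_j(q_j+r_j)\le 2n$.  Your pigeonhole phrasing (``at least half of the $\beta$ pairs satisfy $q+r\ge\sqrt\beta$'') is a mild repackaging of the paper's level-set count, but the content is the same.

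Where you genuinely diverge is the lower bound construction.  The paper builds its factorization from atoms lying along diagonals $q+r=2a$ for $a=1,\ldots,k$, giving roughly $\binom{k+1}{2}$ distinct atoms for some $n\le k^3$.  You instead take a rectangular block $T=\{2\cdot 5^q 7^r : q\in\{1,3,\ldots,2M-1\},\ r\in\{0,\ldots,2M\}\}$, chosen precisely so that the total $5$- and $7$-contributions from $T$ are equal; this symmetry lets you pad with copies of the single atom $70$ to reach the target exponents on $5$ and $7$ simultaneously, with powers of $4$ absorbing the remaining $2$-deficit.  Both constructions yield $\Omega(n^{2/3})$ distinct atoms, but yours has the advantage that the padding step is completely mechanical once the $5$/$7$ symmetry is in place, whereas the paper's diagonal construction requires a bit more bookkeeping (and, as written, its index ranges need minor adjustment).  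Conversely, the paper's construction makes the connection to the upper bound more transparent, since it fills exactly the level sets $\{q+r=\text{const}\}$ that the upper bound counts.
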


\begin{proof}
Consider expressions of the form
$$x^n = c u_1 \cdots u_k u_1' \cdots u_m',$$
where $u_1, \ldots, u_k, u_1', \ldots, u_m'$ are distinct positive integers with $u_i = 2^2 7^{p_i}$ for each $i$ and $u_j' = 2 \cdot 5^{q_j} 7^{r_j}$ for each $j$, and $c$ is any positive integer.  Note that any factorization of $x^n$ in $M_{4,6}$ is an expression of the above form with 0-norm $k + m$, so any upper bound on $k + m$ in expressions of the above form is an upper bound for $\ell_0^M(x^n)$.  

Now, by a similar argument to the first half of the proof of Proposition~\ref{p:acm0norm1/2}, by counting the total number of 7's in $u_1 \cdots u_k$, we see $k \in O(n^{1/2})$.  Similarly, there are $a + 1$ possible values for each $u_j'$ with exactly $a$ total copies of 5 and 7, so if $m \ge \binom{a+1}{2}$, then examining the total number of 5's and 7's in $u_1' \cdots u_m'$, we see
$$\sum_{i = 1}^a i(i+1) \le q_1 + \cdots + q_m + r_1 + \cdots + r_m \le 2n$$
meaning $m \in O(n^{2/3})$.  We conclude $k + m \in O(n^{2/3})$, and thus $\ell_0^M(x^n) \in O(n^{2/3})$.  

Conversely, observe that for each even $k \ge 2$, we may choose $c$ appropriately so that
$$x^n = (2^2)^c \prod_{a=1}^k \prod_{i = 1}^a (2 \cdot 5^{2i+1} 7^{2a-2i-1})$$
is a factorization of $x^n$, with $\binom{k+1}{2} + 1$ distinct atoms, for some $n \le k^3$.  As such, we~conclude $\ell_0^M(x^n) \in \Theta(n^{2/3})$.  
\end{proof}

In view of the above, we pose the following question.  

\begin{question}\label{q:singularacmasymptotics}
Given a singular ACM $M_{a,b}$ and $x \in M_{a,b}$ with $x > 1$, determine the asymptotic behavior of $\ell_0^M(x^n)$, $\ell_1^m(x^n)$, and $\ell_\infty^m(x^n)$ as functions of $n$.  
\end{question}

\section*{Acknowledgements}

The authors were supported by NSF grant DMS-1851542, and made use of the software packages \cite{numericalsgpsgap,acmsage,numericalsgpssage} throughout this work.  


\end{document}